\newtheorem{lemma}{Lemma}
\newtheorem{remark}{Remark}
\newtheorem{conjecture}{Conjecture}
\newtheorem{proposition}{Proposition}
\newtheorem{theorem}{Theorem}
\title{The perfect $2$-colorings of infinite circulant graphs with a continuous set of odd distances\thanks{This work was performed within the framework of the LABEX MILYON (ANR-10-LABX-0070) of Universit\'{e} de Lyon, within the program ``Investissements d'Avenir'' (ANR-11-IDEX-0007) operated by the French National Research Agency (ANR), and has been supported by RFBS grant 18-31-00009. \newline Published in Siberian \`Electronic Mathematical Reports, \href{http://semr.math.nsc.ru/v17/p590-603.pdf}{Volume 17, pp. 590--603 (2020)}}}
\author{Parshina~O.~G.\textsuperscript{1} \and Lisitsyna~M.~A.\textsuperscript{2}}
\date{\small
\textsuperscript{1}Czech Technical University in Prague,\\  Trojanova 13, 120 00 Prague, Czech Republic\\
\textsuperscript{2} Marshal Budyonny Military Academy of Telecommunications,\\ Tikhoretskii pr. 3, 194064 St. Petersburg, Russia\\
\vspace{0.2cm}
}
\begin{document}

\maketitle

\begin{abstract}
A vertex coloring of a given simple graph $G=(V,E)$ with $k$ colors ($k$-coloring) is a map from its vertex set to the set of integers $\{1,2,3,\dots, k\}$. 
A coloring is called perfect if the multiset of colors appearing on the neighbours of any vertex depends only on the color of the vertex.
We consider perfect colorings of Cayley graphs of the additive group of integers with generating set $\{1,-1,3,-3,5,-5,\dots, 2n-1,1-2n\}$ for a positive integer $n$. We enumerate perfect $2$-colorings of the graphs under consideration and state the conjecture generalizing the main result to an arbitrary number of colors.

{\bf Keywords: }{perfect coloring, circulant graph, Cayley graph, equitable partition}
\end{abstract}

\section*{Introduction}

Let $G$ be a simple graph, $k$ be a positive integer and $M=(m_{ij})_{i,j=1}^k$ be a non-negative integer matrix of order $k$. A coloring of vertices of $G$ with $k$ colors is a map $\varphi: V \rightarrow \{1, 2, 3, \dots, k\}$. The value $\varphi(v) = s$ is said to be the {\it color} of $v$. 
Hereinafter by coloring of a graph we mean a coloring of its vertex set.
A coloring of the graph $G$ is called {\it perfect} with parameter matrix $M$, if for any integers $i,j$ in range from 1 to $k$ any vertex colored with $i$ has exactly $m_{ij}$ neighbors colored with $j$. 
In this case the matrix $M$ is called \textit{admissible} for the graph $G$.
The corresponding partition of the vertex set of $G$ is known as 
equitable. 

The concept of perfect coloring plays an important role in graph theory, algebraic combinatorics and coding theory.
The notion of perfect coloring is closely related to the notion of perfect code.
{For example, a distance partition of a distance regular graph in accordance to a perfect code is a perfect coloring.}

Hereinafter $n$ and $k$ are positive integers.
In this paper we aim to classify perfect colorings of graphs from the family of infinite circulants.
The graphs under consideration are Cayley graphs of the additive group of integers with generating set $\{1,-1,3,-3,5,-5,\dots, 2n-1, 1-2n\}$.
We call such graphs infinite circulant graphs with the set of distances $\{1,3,5,\dots, 2n-1\}$.
Perfect $2$-colorings of infinite circulant graphs with the set of distances $\{1,2,3,\dots, n\}$ are enumerated in \cite{P2}. The conjecture generalizing described result to the case of arbitrary number of colors is posed in \cite{Pk}. Partial results on the conjecture one can see in \cite{LP12}.

{Closely related to infinite circulant graphs is the $n$-dimensional rectangular grid graph $G(\mathbb Z^n)$, which is a covering graph of any infinite circulant with $n$ distances.}
Perfect colorings of the infinite rectangular grid graph have been widely studied. Admissible for the graph $G(\mathbb Z^2)$ parameter matrices of order 3 are enumerated by S.~A.~Puzynina~\cite{Pdis}. Perfect $s$-colorings of the graph $G(\mathbb Z^2)$ for $s\leq9$ are listed by D.~S.~Krotov~\cite{KrArxiv}.

A perfect $k$-coloring is called \textit{distance regular} if its parameter matrix is tridiagonalizable. In this case colors of the coloring can be arranged in a way that every vertex of color $i\in\{2,3,\dots,k-1\}$ can only be adjacent to vertices of color $i-1$, $i$ and $i+1$. Moreover, the set of vertices of color $1$ and the set of vertices of color $k$ are completely regular codes.
Parameters of distance regular colorings of the infinite rectangular grid graph are enumerated by S.~V.~Avgustinovich, A.~Yu.~Vasil'eva and I.~V.~Sergeeva~\cite{AVSsq}.

Along with perfect colorings of the infinite rectangular grid graph, perfect colorings of triangle and hexagonal infinite grid graphs have been studied. 
S.~A.~Puzynina proved that for every perfect coloring of infinite triangle or hexagonal grids there exists a periodic coloring of the grid with the same parameter matrix~\cite{Pgrid}. 
Distance regular colorings of the infinite triangle grid graph are enumerated by A.~Yu.~Vasil'eva~\cite{VasGrid}, of the hexagonal grid graph are listed by S.~V.~Avgustinovich, D.~S.~Krotov and A.~Yu.~Vasil'eva~\cite{AKVcodes}.

Let $G=(V,E)$ be a simple graph, $M=(m_{ij})_{i,j=1}^k$ be a square matrix of order $k$, and $r\geq1$. A coloring of the vertex set of the graph $G$ is called perfect of radius $r$ with parameter matrix $M$ if the element $m_{ij}$ stands for the number of vertices of color $j$ at the distance at most $r$ from any vertex of color $i$ for each $i,j\in\{1,2,3,\dots, k\}$.

Admissible parameter matrices of perfect $2$-colorings of radius 1 of the graph $G(\mathbb Z^2)$ are enumerated by M.~Axenovich~\cite{Axe}. In the same paper the author states several necessary conditions on a parameter matrix to be admissible for $G(\mathbb Z^2)$ in the case $r\geq2$.
Parameters and properties of perfect colorings of $G(\mathbb Z^2)$ have been studied by S.~A.~Puzynina in her PhD thesis~\cite{Pdis}. 
In particular, she showed that all perfect colorings of radius $r>1$ of this graph are periodic.
Several results on perfect $2$-colorings of circulant graphs were obtained by D.~B.~Khoroshilova~\cite{Khor2,Khor1}.

Let us mention several results on perfect colorings of graphs with similar to circulants and infinite grid graphs local structure.

Perfect $2$-colorings of the hypercube graph have been studied by D.~G.~Fon-Der-Flaass. 
He obtained necessary conditions on parameters of perfect $2$-colorings of this graph and presented an infinite series of such colorings~\cite{FFHyp}.
Later he obtained a bound on correlation immunity of non-constant unbalanced Boolean functions that allows to obtain a necessary condition for a perfect coloring with given parameters to exist in the hypercube graph~\cite{FFci}. 
Fon-Der-Flaass constructed perfect colorings of the 12-dimensional hypercube graph that attain this bound~\cite{FF12}.
Another method to construct perfect $2$-colorings via parameter matrices was provided by D.~G.~Fon-Der-Flaass and K.~V.~Vorobev~\cite{FFVh}. {A new necessary condition on parameters of perfect $2$-colorings of the hypercube graph was obtained in the recent joint work of D.~S.~Krotov and K.~V.~Vorobev}~\cite{KrVor}. 
Let us note that the set of parameter matrices admissible for this graph has not been described yet even for the case of two colors.

A Johnson graph $J(n,\omega)$ is the graph with the set of boolean vectors of weight $\omega$ as set of vertices; two vertices are adjacent in $J(n,\omega)$, if they differ in exactly two coordinates. W.~J.~Martin showed that the coloring of $J(n,\omega)$ obtained by coloring vertices of blocks of $(\omega-1)-(n,\omega,\lambda)$-scheme with color 1 and all the other vertices of $J(n,\omega)$ with the color $2$ is perfect~\cite{Martin}. 

A systematic study of perfect $2$-colorings in Johnson graphs is performed in the thesis of I.~Yu.~Mogilnykh~\cite{MogJ}. 
He constructed several series of perfect $2$-colorings of Johnson graphs and provided several necessary conditions for such colorings to exist.
These results were used in enumeration of parameters of perfect $2$-colorings of Johnson graphs $J(n,\omega)$, where $n\leq8$.
In~\cite{GGJ} one can find the complete description of admissible parameter matrices of order 2 for the graph $J(n,3)$, where $n$ is odd. The problem of perfect colorings of Johnson graphs classification is not solved even in the case of two colors.

Perfect $2$-colorings of transitive cubic graphs with the set of vertices of cardinality up to 18 are enumerated by S.~V.~Avgustinovich and M.~A.~Lisitsyna in~\cite{AvgLisCube}. In the later work the authors listed perfect colorings of the infinite prism graph with arbitrary number of colors~\cite{LisAvgPr}.

\section{Preliminaries}

Let $G=(V,E)$ be a graph with vertex set $V$ and edge set $E$. 
For a given vertex $v\in V$, we denote the set of vertices adjacent to $v$ by $N(v)$ and call it the {\it neighborhood} of $v$.

We are interested in graphs defined as follows. Let us consider a set $D=\{d_1,d_2,\dots,d_n\}$ of positive integers enumerated in ascending order. 
We say that the graph $\mathrm{Ci}_\infty(D)=(\mathbb Z,E)$, where $E=\{(i,i\pm d)| i\in\mathbb Z, d\in D\}$, is the {\it infinite circulant graph with the set of distances D}. 
This graph can be regarded as Cayley graph of the additive group of $\mathbb Z$ with the generating set $\{\pm d_j\}_{j=1}^n$. 
Along with infinite circulant graphs we consider finite ones. 
Let $t$ be a positive integer. 
A {\it finite circulant graph with the set of distances $D$} is the graph $\mathrm{Ci}_t(D)$ with the set $\mathbb Z_t$ as the vertex set and the multiset $\{(i, i+d$~mod~$t) \,|\, i\in\mathbb Z_t, d\in D \}$ as the edge set. 
Such graphs can have multiedges and loops, namely they are pseudographs. 
A coloring of a pseudograph is called perfect if for two vertices of the same color the multisets of colors of their neighborhoods coincide. 
By the multiset of colors of a vertex $v$ neighborhood we mean the multiset where the number of occurrences of a color $i$ is equal to the number of edges between the vertex $v$ and vertices of color $i$.

{Let $G_1= (V_1, E_1)$ and $G_2 = (V_2, E_2)$ be two pseudographs. A surjection $f: V_1 \rightarrow V_2$ is a \textit{covering map} from $G_1$ to $G_2$ if for each vertex $v\in V_1$, the restriction of $f$ to the neighbourhood of $v$ is a bijection onto the neighbourhood of $f(v)$ in $G_2$. In other words, $f$ maps edges incident to $v$ one-to-one onto edges incident to $f(v)$.
If there exists a covering map from $G_1$ to $G_2$, then $G_1$ is a \textit{covering graph} of $G_2$.}

\begin{proposition}\label{h}
{Let $G_1$ and $G_2$ be pseudographs. If there exists a covering map from $G_1$ to $G_2$, then every perfect coloring of $G_2$ induces a perfect coloring of $G_1$ with the same parameter matrix.}
\end{proposition}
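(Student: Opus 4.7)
The plan is to pull back the coloring along the covering map and verify that the pullback has the same parameter matrix. More precisely, given a perfect coloring $\varphi: V_2 \to \{1,2,\dots,k\}$ of $G_2$ with parameter matrix $M=(m_{ij})$, I would define $\psi : V_1 \to \{1,2,\dots,k\}$ by $\psi = \varphi \circ f$, where $f$ is the covering map from $G_1$ to $G_2$, and then check that $\psi$ is perfect with parameter matrix $M$.

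The verification reduces to a single local computation at an arbitrary vertex $v \in V_1$. First I would let $i = \psi(v) = \varphi(f(v))$; because $\varphi$ is perfect with matrix $M$, the multiset of colors on the edges incident to $f(v)$ in $G_2$ has exactly $m_{ij}$ occurrences of each color $j$. Next I would invoke the definition of covering map to get a bijection between the edges of $G_1$ incident to $v$ and the edges of $G_2$ incident to $f(v)$, and observe that this bijection is color-preserving at the far endpoints, since the color of any neighbor $u$ of $v$ equals $\psi(u) = \varphi(f(u))$. It follows that the multiset of colors of the neighborhood of $v$ in $G_1$ (counted with edge multiplicities, as the paper prescribes for pseudographs) coincides with that of $f(v)$ in $G_2$, so it has exactly $m_{ij}$ entries equal to $j$ for every $j$. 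Since $v$ was arbitrary, $\psi$ is perfect with parameter matrix $M$.

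The only subtle point worth stressing is the pseudograph convention: loops and multiple edges must be counted correctly, so one must use the edge-level bijection supplied by the covering map rather than merely a bijection between neighbor sets. Once this is acknowledged the argument is essentially bookkeeping, so I do not anticipate a real obstacle; the whole proof should fit in a short paragraph.
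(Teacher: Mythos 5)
Your proof is correct and is exactly the argument the paper has in mind: the paper simply states that the proposition ``follows immediately from the definitions of covering map and perfect coloring,'' and your pullback $\psi=\varphi\circ f$ together with the edge-level bijection (correctly handling loops and multiedges) is the standard way to spell that out.
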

{The proof of this statement follows immediately from the definitions of covering map and perfect coloring. }

Proposition \ref{h} provides us a method of constructing perfect colorings of a given graph using perfect colorings of other graphs, which are usually chosen to have more convenient for this purpose structure.
{We will use a covering map from $\mathrm{Ci}_\infty(D)$ to a finite pseudograph $\mathrm{Ci}_t(D)$ in enumeration of perfect colorings of the graph under consideration. }

Let $t$ be a positive integer. A coloring $\varphi$ of the circulant graph $\mathrm{Ci}_\infty(D)$ is {\it periodic} with the length of period $t$, if $\varphi(i)=\varphi(i+t)$ for every $i\in\mathbb{Z}$. 
We  will write $[\varphi(i+1)\varphi(i+2)\cdots\varphi(i+t)]$ to depict the period of $\varphi$.

Hereinafter $D_n$ stands for the set of distances $\{1,3,5,\dots, 2n-1\}$. In the paper we consider finite and infinite circulants with the set of distances $D_n$. In finite case we are interested in circulants with even number of vertices. 

Let us call graphs $\mathrm{Ci}_\infty(D_n)$ and $\mathrm{Ci}_t(D_n)$, $t\in2\mathbb{N}$, {\it infinite and finite circulant graphs with a continuous set of odd distances} respectively. 
These graphs are regular of degree $2n$ and bipartite. 
For a given graph $\mathrm{Ci}_l(D_n)$, where $l\in2\mathbb{N}\cup\{\infty\}$, we denote by $V_e$ the set of its vertices with even indices, and by $V_o$ the set of vertices with odd indices.

We will write $v_e$ or $v_o$ when it is necessary to indicate that a vertex belongs to even or to odd part of the graph respectively.

\begin{proposition}\label{period}
Every perfect coloring of the graph $\mathrm{Ci}_\infty(D_n), n\in\mathbb{N}$, is periodic.
\end{proposition}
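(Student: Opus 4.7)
The plan is to show that a sufficiently long window of consecutive color values determines the entire coloring, and then to invoke a pigeonhole argument on the finitely many possible windows.

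First, I would observe that in $\mathrm{Ci}_\infty(D_n)$ the neighborhood of any vertex $v$ consists precisely of the $2n$ vertices of opposite parity in the interval $[v-(2n-1),\,v+(2n-1)]$. Fix a perfect coloring $\varphi$ with parameter matrix $M=(m_{ij})$ and, for each color $c$, let $C_c$ denote the multiset of colors appearing on the neighborhood of any vertex of color $c$; this multiset is well-defined by the perfect coloring property. For $i\in\mathbb{Z}$, let
\[
W_i=(\varphi(i),\varphi(i+1),\dots,\varphi(i+4n-3))
\]
be the window of length $4n-2$ starting at $i$.

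Next I would show that $W_i$ determines both $\varphi(i-1)$ and $\varphi(i+4n-2)$. Consider the vertex $v=i+2n-1$: its entire neighborhood lies in $[i,\,i+4n-2]$, and only the position $i+4n-2$ is outside $W_i$. Since the multiset of colors on $N(v)$ must equal $C_{\varphi(v)}$, and all entries except the one at position $i+4n-2$ are already known, the missing color $\varphi(i+4n-2)$ is uniquely recovered as the difference of multisets. Applying the same argument to $v'=i+2n-2$, whose neighborhood lies in $[i-1,\,i+4n-3]$, recovers $\varphi(i-1)$ from $W_i$. Iterating in both directions shows that $\varphi$ is completely determined by any single window $W_i$.

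Finally, since $\varphi$ takes values in a finite set of $k$ colors, there are at most $k^{4n-2}$ distinct windows of length $4n-2$. By the pigeonhole principle, there exist integers $i<j$ with $W_i=W_j$. The determinism established above then gives $\varphi(i+m)=\varphi(j+m)$ for every $m\in\mathbb{Z}$, so $\varphi$ is periodic with period dividing $j-i$.

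The main obstacle is the verification that a window of the stated length really does determine its one-step extensions; concretely, choosing the correct ``pivot'' vertex $v=i+2n-1$ (resp.\ $v'=i+2n-2$) whose neighborhood sticks out of $W_i$ on exactly one side by exactly one position. The length $4n-2$ is optimal for this, since the leftmost and rightmost neighbors of any vertex are $4n-2$ apart, and this is precisely what forces both the window length and the unique-extension property.
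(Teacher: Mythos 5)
Your proof is correct and follows essentially the same route as the paper: both arguments rest on the observation that all but one neighbor of a suitably chosen vertex lie inside a window of length $4n-2$, so the parameter matrix forces the remaining color, and periodicity then follows from finiteness of the set of windows. Your write-up is in fact slightly more complete, since the paper leaves the final pigeonhole step implicit behind the phrase ``this property induces the periodicity.''
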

\begin{proof}
Let $\varphi$ be a perfect coloring of $\mathrm{Ci}_\infty(D_n)$ with parameter matrix $M$. 
Let us take an arbitrary integer $i$ and consider a vertex $v_i$ with its neighborhood $N(v_i)=v_{i-2n+1}v_{i-2n-1}v_{i-2n-3}\cdots v_{i-3}v_{i-1} v_{i+1}v_{i+3}\cdots v_{i+2n-1}$ perfectly colored with $\varphi$. 
Let us consider the vertex $v_{i+2n+1}$. Since it is the only vertex from the set $N(v_{i+2})\backslash N(v_i)$, its color is uniquely determined by the color of the vertex $v_{i+2}$ and the parameter matrix $M$. 
The same holds for the vertex $v_{i-2n+3}$ by symmetry.
This property induces the periodicity of the coloring $\varphi$.
\end{proof}

We say that a coloring $\varphi$ of a bipartite graph is {\it bipartite} if sets of colors of the even and odd parts of the graph are disjoint. 

\begin{remark}\label{even}
\normalfont
{Let $\varphi$ be a periodic perfect coloring of a bipartite graph. Then either $\varphi$ is bipartite, or the even and the odd parts of the graph contain the same number of vertices of every color.}
\end{remark}
{This remark gives the necessary condition for a perfect coloring to exist in the graphs under consideration. }

The following proposition concerns perfect colorings of the infinite path graph, which is, in our terms, the infinite circulant graph $\mathrm{Ci}_\infty(\{1\})$.\\~\\
\begin{proposition}
Let $k$ be a positive integer. The list of perfect $k$-colorings of the graph $\mathrm{Ci}_\infty(\{1\})$ is exhausted by colorings with the following four periods:
\begin{enumerate}
\item $[123\cdots (k-1)k]$;
\item $[k(k-1)(k-2)\cdots212\cdots (k-2)(k-1)]$;
\item $[k(k-1)(k-2)\cdots212\cdots (k-2)(k-1)k]$;
\item $[k(k-1)(k-2)\cdots2112\cdots (k-2)(k-1)k]$.
\end{enumerate}
\end{proposition}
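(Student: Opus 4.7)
The plan is to exploit the two-neighbour structure of $\mathrm{Ci}_\infty(\{1\})$ to first pin down the parameter matrix, and then recover the coloring from it. By Proposition~\ref{period} applied with $n=1$, any perfect coloring $\varphi$ is periodic; in fact the same argument yields a sharper rigidity statement, because the only two neighbours of $v_i$ are $v_{i-1}$ and $v_{i+1}$. The multiset $\{\varphi(v_{i-1}),\varphi(v_{i+1})\}$ is prescribed by $\varphi(v_i)$ and the parameter matrix $M$, and therefore $\varphi(v_{i+2})$ is uniquely determined by the pair $(\varphi(v_i),\varphi(v_{i+1}))$. Hence once two consecutive colours are fixed, the entire coloring is.

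I would then analyse the admissible matrices $M$. Each row of $M$ sums to $2$, so its entries lie in $\{0,1,2\}$. Introduce the colour-interaction graph $H$ on $\{1,\ldots,k\}$ with $\{i,j\}$ an edge ($i\ne j$) precisely when $m_{ij}\ge 1$; the implication $m_{ij}\ge 1\Leftrightarrow m_{ji}\ge 1$ is immediate from the symmetry of adjacency in $\mathbb{Z}$. Three elementary observations cut $H$ down: it is connected (the path is connected and all $k$ colours occur), every vertex has degree at most $2$ (because each colour's neighbour-multiset has size $2$ and thus meets at most two distinct colours), and $m_{cc}=2$ cannot occur for $k\ge 2$ (otherwise a monochromatic component propagates to the whole graph). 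Consequently $m_{cc}\in\{0,1\}$ and $H$ is either a cycle or a path.

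The remainder is a case split. If $H$ is a cycle, the two $H$-neighbours of any $c$ already exhaust the row of $M$, so $m_{cc}=0$ and the neighbour-multiset of any colour-$c$ vertex is the pair of cyclic $H$-neighbours of $c$; the rigidity step then forces $\varphi$ to be the cyclic progression around $H$, which after relabelling is Case~1. If $H$ is a path, relabel it as $1{-}2{-}\cdots{-}k$; each interior colour satisfies $m_{c,c-1}=m_{c,c+1}=1$, and each endpoint $c\in\{1,k\}$ admits exactly two options: either its unique path-neighbour is taken with multiplicity $2$ (so $m_{cc}=0$), or with multiplicity $1$ together with a self-loop $m_{cc}=1$. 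The four resulting combinations of endpoint choices, together with the rigidity, produce the periods of Cases~2 and~4 when the two endpoints agree and of Case~3 when they disagree (the two disagreeing combinations yielding the same coloring after the colour swap $c\mapsto k+1-c$). The step I expect to require the most care is this final bookkeeping: verifying that the four endpoint combinations realize exactly the listed periods, and that neither reversing the orientation of the colour path nor reshuffling the cycle labels introduces additional periods.
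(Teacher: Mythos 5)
Your argument is correct and complete. Note that the paper itself gives no in-text proof of this proposition: it only refers the reader to Lemma~2 of \cite{LisAvgPr}, so there is nothing internal to compare against. Your route --- the observation that two consecutive colours together with $M$ determine the whole colouring, followed by the analysis of the colour-interaction graph $H$ (connected, maximum degree at most $2$, no colour $c$ with $m_{cc}=2$), so that $H$ is either a cycle (Case~1) or a path whose two endpoints independently choose between $m_{cc}=0$ and $m_{cc}=1$ (Cases~2--4, the two mixed endpoint choices being identified under the relabelling $c\mapsto k+1-c$) --- is the standard quotient-graph proof of this classical fact, and it correctly supplies both the exhaustiveness (every admissible $M$ falls into one of these cases) and the rigidity that pins down the period for each $M$.
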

The proof of this statement can be found, for example, in \cite{LisAvgPr} (Lemma~2). Let us note that these colorings are perfect for every infinite circulant graph under consideration.  

We state the following conjecture.
\begin{conjecture}\label{conj}
Let $k$ and $n$ be positive integers. The set of perfect $k$-colorings of the graph $\mathrm{Ci}_\infty(D_n)$ consists of perfect colorings induced from perfect colorings of the infinite path graph and of graphs $\mathrm{Ci}_t(D_n)$ for $t=4n-2,4n,4n+2$.
\end{conjecture}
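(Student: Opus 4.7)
The plan is to combine Proposition \ref{period} with an analysis of a local shift identity. By Proposition \ref{period}, every perfect $k$-coloring $\varphi$ of $\mathrm{Ci}_\infty(D_n)$ has some minimal period $t$, and via the covering map $\mathbb{Z}\to\mathbb{Z}/t\mathbb{Z}$ it is equivalent to a perfect coloring of $\mathrm{Ci}_t(D_n)$. So the conjecture reduces to showing that $t$ either coincides (up to a divisor) with one of $4n-2$, $4n$, $4n+2$, or $\varphi$ exhibits one of the four period patterns listed in the preceding proposition.

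The key technical tool I would exploit is a \emph{shift identity}, which is essentially what drives the proof of Proposition \ref{period}. Since $N(v_{i+2})$ is obtained from $N(v_i)$ by replacing $v_{i-2n+1}$ with $v_{i+2n+1}$, for a perfect coloring with parameter matrix $M=(m_{ij})$ one has
\[
m_{\varphi(v_i),j}-m_{\varphi(v_{i+2}),j}=\mathbf{1}[\varphi(v_{i-2n+1})=j]-\mathbf{1}[\varphi(v_{i+2n+1})=j]
\]
for every color $j$. Hence the difference of any two rows of $M$ indexed by the colors of neighboring same-parity vertices is either zero or a difference of two standard basis vectors; and once $M$ is fixed, $\varphi(v_{i+2n+1})$ is determined by $\varphi(v_i)$, $\varphi(v_{i+2})$, and $\varphi(v_{i-2n+1})$.

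From here I would split according to Remark \ref{even}. In the bipartite case the coloring restricts independently to $V_e$ and $V_o$, and each part carries the structure of a simpler circulant graph on which one can apply induction on $n$ or a direct enumeration. In the non-bipartite case each color is equidistributed across the two parts, and I would encode the coloring as a closed walk of length $t$ in an auxiliary automaton whose states are length-$(2n+1)$ color windows and whose transitions are precisely those allowed by the shift identity. The conjecture then reduces to a classification of the recurrent components of this automaton.

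The main obstacle will be precisely this classification: one needs to show that the recurrent components come in exactly two flavors. Short cycles whose lengths divide one of $4n-2$, $4n$, $4n+2$ should account for the three finite circulants, while a separate family of longer cycles whose color sequences are forced into the unimodal ``up-then-down'' shape should produce the four path-graph patterns. Extracting this global unimodality from the purely local shift identity for arbitrary $k$ is the delicate part; I would hope that invariants such as the multiset of rows of $M$ appearing along the cycle, together with the balance constraint from Remark \ref{even}, are enough to pin down the structure, as the paper's $k=2$ analysis seems to suggest.
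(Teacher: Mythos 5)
The statement you are asked to prove is stated in the paper as a \emph{conjecture}, and the paper does not prove it: it only establishes the case $k=2$ (Theorem~\ref{thm}), via Lemma~\ref{bc} (classifying $b+c\in\{4n,2n,2n\pm1\}$) and Lemmas~\ref{b+c+}, \ref{b+c-} (pinning down the period lengths $2n\pm1$ by building alternating chains and deriving a contradiction with Remark~\ref{even}). Your shift identity is correct and is indeed the engine behind Proposition~\ref{period} and all of those lemmas, so your setup is faithful to the paper's method. But your proposal is a plan, not a proof: the step you yourself flag as ``the delicate part'' --- classifying the recurrent components of the window automaton and showing they split into divisors of $4n-2$, $4n$, $4n+2$ on one hand and the four unimodal path-graph patterns on the other --- is exactly the content of the conjecture and is left entirely open. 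Nothing in the proposal supplies the invariant that would force this dichotomy for arbitrary $k$; for $k=2$ the paper gets it only through an exhaustive case analysis on the pair $(\varphi(v_{i-2n+1}),\varphi(v_{i+2n+1}))$, which has no evident analogue when the row difference $m_{\varphi(v_i),\cdot}-m_{\varphi(v_{i+2}),\cdot}$ can be any of $k(k-1)+1$ vectors of the form $e_p-e_q$.

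Two further concrete gaps. First, the bipartite branch: you claim the coloring ``restricts independently to $V_e$ and $V_o$'' as a simpler circulant that can be handled by induction on $n$. This is not right as stated: in a bipartite coloring all neighbours of a vertex lie in the opposite part, so the perfectness condition is a \emph{cross}-constraint (the multiset of colors in a window of $2n$ consecutive opposite-parity vertices must depend only on the color of the central vertex); neither part inherits a perfect-coloring structure of a smaller circulant on its own. Second, the reduction ``period $t$ divides one of $4n-2,4n,4n+2$ or the coloring is a path pattern'' is the right target but is strictly stronger than periodicity; the paper's own closing example (the period-$7$ coloring $[1234567]$ of $\mathrm{Ci}_\infty(D_2)$, perfect for the infinite graph but for none of $\mathrm{Ci}_6,\mathrm{Ci}_8,\mathrm{Ci}_{10}$) shows that the path-graph family genuinely escapes the finite circulants for $k>2$, so any argument must handle it separately rather than hoping it falls out of the same divisibility analysis.
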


In this paper we prove the conjecture for $k=2$. In this case the set of perfect colorings of the infinite path graph consists of three equivalence classes of colorings with periods $[12]$, $[212]$ and $[2112]$.

\subsection{Perfect colorings of finite bipartite circulants}\label{finite}

In this section we consider perfect colorings of graphs $\mathrm{Ci}_t(D_n)$ for $t\in\{4n,4n-2,4n+2\}$, $n\in\mathbb{N}$.

\subsubsection{The case $t=4n$}

The graph $\mathrm{Ci}_{4n}(D_n)$ is the complete bipartite graph $K_{2n,2n}$. A coloring of this graph is perfect if it is bipartite or if odd and even parts of the graph contain the same number of vertices of each color (see Remark~\ref{even}). 

{To construct a bipartite perfect coloring of this graph, we should split the set of colors into two disjoint subsets $C_e$ and $C_o$, and then color vertices of the even (odd) part of the graph with colors from $C_e$ ($C_o$) in arbitrary order. It is easy to see that every coloring obtained this way is perfect for $K_{2n,2n}$.

For any perfect and non-bipartite $k$-coloring of this graph, the neighborhood of every vertex has the same coloring structure regardless its own color, thus column elements in the parameter matrix of any such coloring are equal. In other words, for every index $j\in\{1,2,\dots,k\}$, $m_{ij}=m_{kj}:=m_j$ $\forall i,k\in\{1,2,\dots,k\}$. 

In this case the number of vertices of each color $j$ in each part of the graph must be equal to $m_j$, and we can color each part of graph independently, putting colors in arbitrary order.
For a given parameter matrix there exist $2\frac{k}{m_1!m_2!\cdots m_k!}$ different non-bipartite $k$-colorings of $\mathrm{Ci}_{4n}(D_n)$. }

Figure~\ref{Fig:Ci8} shows the graph $\mathrm{Ci}_{8}(\{1,3\})$ perfectly colored with three colors. 
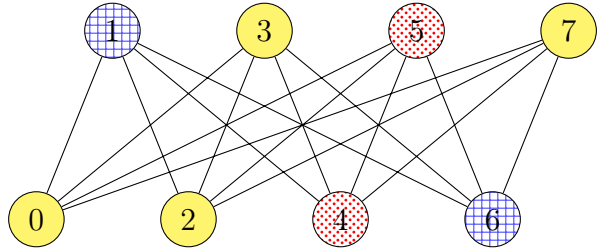
\begin{figure}[H]
\centering
\begin{tikzpicture}
\centering
\tikzstyle{every node}=[draw,shape=circle];
\node (0) [fill=yellow!70] at (0,0) {0};
\node (1) [pattern=grid, pattern color=blue!70] at (1,2.5) {1};
\node (2) [fill=yellow!70] at (2,0) {2};
\node (3) [fill=yellow!70] at (3,2.5) {3};
\node (4) [pattern=crosshatch dots, pattern color=red] at (4,0) {4};
\node (5) [pattern=crosshatch dots, pattern color=red] at (5,2.5) {5};
\node (6) [pattern=grid, pattern color=blue!70] at (6,0) {6};
\node (7) [fill=yellow!70] at (7,2.5) {7};

\draw (0) -- (1)
      (0) -- (3)
      (0) -- (5)
      (0) -- (7)
	  (2) -- (1)
      (2) -- (3)
      (2) -- (5)
      (2) -- (7)
      (4) -- (3)
      (4) -- (5)
      (4) -- (1)
      (4) -- (7)
      (6) -- (3)
      (6) -- (5)
      (6) -- (7)
      (6) -- (1);
\end{tikzpicture}
\caption{Perfect $3$-coloring of the graph $\mathrm{Ci}_{8}(\{1,3\})$.}
\label{Fig:Ci8}
\end{figure}

\subsubsection{The case $t=4n+2$}\label{4n+2}

Let us remind that a {\it perfect matching} of a graph is an independent edge set in which every vertex of the graph is incident to exactly one edge of the matching.

We may say that the graph $\mathrm{Ci}_{4n+2}(D_n)=(V_e\cup V_o,E)$ is the complete bipartite graph $K_{2n+1,2n+1}$ without the perfect matching $P_{2n+1}=\{(i,i+2n+1)| i=0,1,2,\dots,2n\}$. 
In other words, every vertex $i$ of one part of the graph is adjacent to all vertices of another part except for the vertex $j$ such that $(i,j)\in P_{2n+1}$. 

Let $\varphi$ be a perfect coloring of $\mathrm{Ci}_{4n+2}(D_n)$. 
Let us consider an edge $(i,j)$ from $P_{2n+1}$, its endpoints $i$ and $j$ are colored with (not necessary distinct) colors $\varphi(i)$ and $\varphi(j)$. 
It is easy to see that in this case any edge from $P_{2n+1}$ having one endpoint colored with $\varphi(i)$ must have another endpoint colored with $\varphi(j)$, and vice versa. 
{This condition directly follows from the definition of perfect coloring and means, in particular, that the set of colors of a bipartite perfect coloring must be of even cardinality. We will use this necessary condition to construct perfect colorings, let us refer to it as the condition $(\star)$.}

To construct a perfect bipartite coloring $\varphi$ of $\mathrm{Ci}_{4n+2}(D_n)$, we split the set of colors $C$ into two disjoint subsets of the same cardinality $C=C_e\cup C_o$; 
then we arrange colors in pairs $(c_i,c_j)$, where $i,j\in\{1,2,\dots, |C_e|\}$, and $c_i\in C_e$, $c_j\in C_o$;
we color every edge $(v_e,v_o)$ of $P_{2n+1}$ with one of the assigned pairs of colors such that $\varphi(v_e)\in C_e$, $\varphi(v_o)\in C_o$. 

Let us construct a non-bipartite perfect coloring. By the definition of perfect coloring, each part of the graph must have the same number of vertices of each color. This, together with the condition $(\star)$, give us the following method.
Let $\phi$ denote the coloring we are going to construct.

Endpoints of every edge can be colored with the same color or differently. From the conditions above it follows, that if there is an edge $(v_e,v_o)$ with $\phi(v_e)=x$ and $\phi(v_o)=y$, $x\neq y$, then there must be an edge $(u_e,u_o)$ with $\phi(u_e)=y$ and $\phi(u_o)=x$, otherwise the coloring $\phi$ cannot be perfect.

Let us split the set of colors into two disjoint subsets, $C=C_1\cup C_2$, where $|C_2|$ is even.
Along with that we split the edges of the perfect matching $P_{2n+1}$ into two disjoint subsets $P_1$ and $P_2$, where $|P_2|$ is even. We color the endpoints of edges from the set $P_1$ with colors from the set $C_1$ in a way that endpoints of every edge get the same color. 

We arrange colors of $C_2$ and edges from $P_2$ in pairs. 
We color each pair of edges $(v_e, v_o),(u_e,u_o)$ of the set $P_2$ in a way that endpoints of every edge get different colors, but $\phi(v_e)=\phi(u_o)$, and $\phi(v_o)=\phi(u_e)$.

If the set $C_2$ is empty, then each edge has endpoints colored with the same color. The set $C_1$ can be empty only if the edge set is of even cardinality, then all edges belong to $P_2$ and colored in a way described above. It is easy to verify that in both cases colorings will be perfect.

{It should be noted that this construction follows only from the necessary conditions on a coloring of the bipartite graph to be perfect and non-bipartite, and every perfect coloring of such graph can be obtained using this procedure.}

An example of a perfect non-bipartite coloring is shown in Figure~\ref{Fig:Ci10}. The absent perfect matching is $P_{10}=\{(0,5), (1,6), (2,7), (3,8), (4,9)\}=P_1\cup P_2$, where $P_1=\{(0,5),(1,6),(3,8)\}$ and $P_2=\{(2,7),(4,9)\}$. 

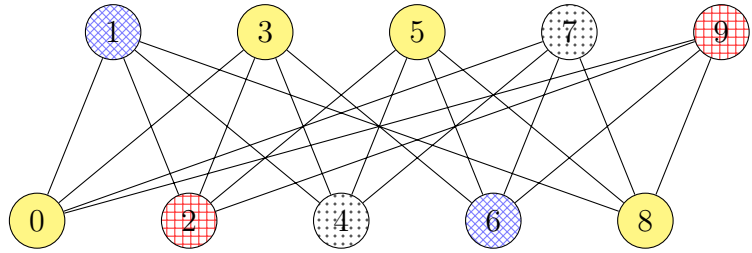
\begin{figure}[H]
\centering
\begin{tikzpicture}
\centering
\tikzstyle{every node}=[draw,shape=circle];
\node (0) [fill=yellow!60] at (0,0) {0};
\node (1) [pattern=crosshatch, pattern color=blue!50] at (1,2.5) {1};
\node (2) [pattern=grid, pattern color=red!80] at (2,0) {2};
\node (3) [fill=yellow!60] at (3,2.5) {3};
\node (4) [pattern= dots, pattern color=gray!130] at (4,0) {4};
\node (5) [fill=yellow!60] at (5,2.5) {5};
\node (6) [pattern=crosshatch, pattern color=blue!50] at (6,0) {6};
\node (7) [pattern= dots, pattern color=gray!130] at (7,2.5) {7};
\node (8) [fill=yellow!60] at (8,0) {8};
\node (9) [pattern=grid, pattern color=red!80] at (9,2.5) {9};

\draw (0) -- (1)
      (0) -- (3)
      (0) -- (9)
      (0) -- (7)
	  (2) -- (1)
      (2) -- (3)
      (2) -- (9)
      (2) -- (5)
      (4) -- (3)
      (4) -- (5)
      (4) -- (1)
      (4) -- (7)
      (6) -- (3)
      (6) -- (5)
      (6) -- (7)
      (6) -- (9)
      (8) -- (1)
      (8) -- (9)
      (8) -- (7)
      (8) -- (5);
\end{tikzpicture}
\caption{Perfect $4$-coloring of the graph $\mathrm{Ci}_{10}(\{1,3\})$.}
\label{Fig:Ci10}
\end{figure}

\subsubsection{The case $t=4n-2$}\label{4n-2}

Let us consider the perfect matching on $4n-2$ vertices $P_{2n-1}=\{(i,i+2n-1)| i=0,1,2,\dots,2n-2\}$.
Every vertex $i\in V_e$ of the bipartite pseudograph $\mathrm{Ci}_{4n-2}(D_n)=(V_e\cup V_o,E)$ is adjacent to all vertices of $V_o$ and has an extra edge to the vertex $j$ such that $(i,j)\in P_{2n-1}$. 
The same holds for every vertex of $V_o$. 
Informally speaking, $\mathrm{Ci}_{4n-2}(D_n)$ is the complete bipartite graph $K_{2n-1,2n-1}$ with extra perfect matching $P_{2n-1}$.

The coloring procedure for this graph is the same as the coloring procedure for the graph $\mathrm{Ci}_{4n-2}(D_n)$.  One should split the set of colors into two disjoint subsets and then color endpoints of edges of the perfect matching $P_{2n-1}$ in the same way as we colored edges of $P_{2n+1}$ from the previous case.

Two examples of perfect $2$-colorings of $\mathrm{Ci}_6(\{1,3\})$ are shown in figure~\ref{Fig:Ci6}. In the first case the set of colors $C=\{blue,red\}$ coincides with the set $C_1$, while sets $C_2$ and $P_2$ are empty. In the second picture the bipartite coloring is shown.

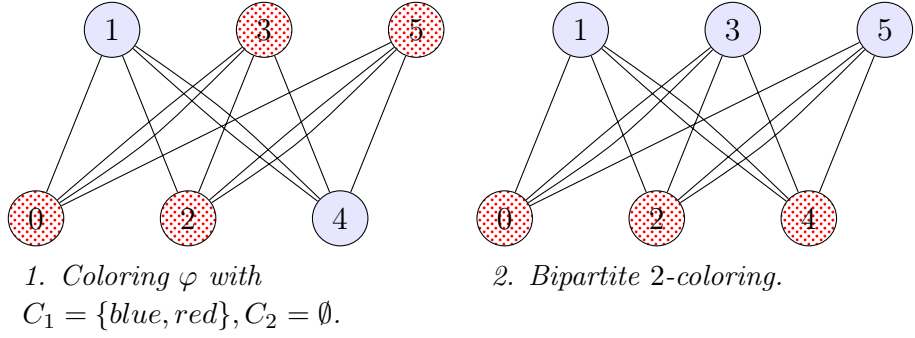
\begin{figure}[H]
\centering
\begin{tabular}{l l}
\begin{tikzpicture}
\tikzstyle{every node}=[draw,shape=circle];
\node (0) [pattern=crosshatch dots, pattern color=red] at (0,0) {0};
\node (1) [fill=blue!10] at (1,2.5) {1};
\node (2) [pattern=crosshatch dots, pattern color=red] at (2,0) {2};
\node (3) [pattern=crosshatch dots, pattern color=red] at (3,2.5) {3};
\node (4) [fill=blue!10] at (4,0) {4};
\node (5) [pattern=crosshatch dots, pattern color=red] at (5,2.5) {5};

\draw (0) -- (1)
	  (0) .. controls(1.5,1).. (3)
      (0) .. controls(1.5,1.3).. (3)
      (0) -- (5)
	  (2) -- (1)
      (2) -- (3)
      (2) .. controls(3.5,1).. (5)
      (2) .. controls(3.5,1.3).. (5)
      (4) -- (3)
      (4) .. controls(2.5,1.45).. (1)
      (4) .. controls(2.5,1.15).. (1) 
      (4) -- (5)      ;
\end{tikzpicture}
&
\begin{tikzpicture}
\tikzstyle{every node}=[draw,shape=circle];
\node (0) [pattern=crosshatch dots, pattern color=red] at (0,0) {0};
\node (1) [fill=blue!10] at (1,2.5) {1};
\node (2) [pattern=crosshatch dots, pattern color=red] at (2,0) {2};
\node (3) [fill=blue!10] at (3,2.5) {3};
\node (4) [pattern=crosshatch dots, pattern color=red] at (4,0) {4};
\node (5) [fill=blue!10] at (5,2.5) {5};

\draw (0) -- (1)
	  (0) .. controls(1.5,1).. (3)
      (0) .. controls(1.5,1.3).. (3)
      (0) -- (5)
	  (2) -- (1)
      (2) -- (3)
      (2) .. controls(3.5,1).. (5)
      (2) .. controls(3.5,1.3).. (5)
      (4) -- (3)
      (4) .. controls(2.5,1.45).. (1)
      (4) .. controls(2.5,1.15).. (1) 
      (4) -- (5)      ;
\end{tikzpicture}\\
\raggedright 
{\small \it \enspace 1. Coloring $\varphi$ with} &
{\small \it \enspace 2. Bipartite $2$-coloring.}\\
{\small\it \enspace $C_1=\{blue,red\},C_2=\emptyset$.} & 
\end{tabular}
\caption{Perfect $2$-colorings of the graph $\mathrm{Ci}_{6}(\{1,3\})$.}
\label{Fig:Ci6}
\end{figure}

\begin{remark}\label{twoc}
\normalfont
If for $t=4n\pm2$ the set of colors is $C=\{0,1\}$, there are only two possibilities:
\begin{enumerate}
\item $C_1=C=\{0,1\}$, $C_2=\emptyset$. Endpoints of every edge of the perfect matching are either both colored with 0, or both colored with 1.
\item $C_1=\emptyset$, $C_2=\{0,1\}$. The only possible perfect $2$-coloring in this case is the bipartite one.
\end{enumerate}
\end{remark}
\section{Main result}
In this section we consider perfect $2$-colorings of the graph $\mathrm{Ci}_\infty(D_n)$. 
As a matter of convenience we will name colors of $2$-colorings {\it black} $(\bullet)$ and {\it white} $(\circ)$.
The parameter matrix of a perfect $2$-coloring has the following form: $\left (\begin{array}{cc} a & b\\ c & d\end{array}\right)$.
Since the graph under consideration is regular of degree $2n$, the parameters $a$ and $d$ can be represented as $2n-b$ and $2n-c$ respectively. 
Sometimes instead of considering the parameter matrix of a coloring we will take into account parameters $b$ and $c$, which are called {\it outer degrees} of black and white color respectively. 
A pair $(b,c)$ is called {\it admissible} for the graph $\mathrm{Ci}_\infty(D_n)$ if there exists a perfect $2$-coloring of $\mathrm{Ci}_\infty(D_n)$ with parameter matrix $\left (\begin{array}{cc} 2n-b & b\\ c & 2n-c\end{array}\right)$.

\begin{theorem}\label{thm}
Let $n$ be a positive integer, and $\mathrm{Ci}_\infty(D_n)$ be the infinite circulant graph with a continuous set of odd distances. The set of perfect $2$-colorings of $\mathrm{Ci}_\infty(D_n)$ consists of perfect colorings induced by perfect colorings of the infinite path graph and of graphs $\mathrm{Ci}_t(D_n)$ for $t=4n-2,4n,4n+2$.
\end{theorem}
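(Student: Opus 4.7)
The plan is to combine Proposition~\ref{period} (which supplies periodicity) with a single algebraic identity obtained by comparing the perfect-coloring equation at two vertices that differ in index by $2$, and then case-analyze on $b+c$. Identifying the two colors with $\{0,1\}$, observe that $N(v_{i+2})$ is obtained from $N(v_i)$ by removing $v_{i-2n+1}$ and inserting $v_{i+2n+1}$. Subtracting the two ``number of white neighbours'' identities yields the local relation
\[
\varphi(v_{i+2n+1}) - \varphi(v_{i-2n+1}) \;=\; (2n-b-c)\bigl(\varphi(v_{i+2})-\varphi(v_i)\bigr)\qquad(\star).
\]
As both sides of $(\star)$ lie in $\{-1,0,1\}$, either $|2n-b-c|\ge 2$ or $b+c\in\{2n-1,\,2n,\,2n+1\}$. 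The first possibility forces $\varphi(v_{i+2})=\varphi(v_i)$ for every $i$, so each parity class of $\mathbb Z$ is monochromatic; since $\varphi$ uses both colors, $\varphi$ must be the bipartite alternating coloring $[\bullet\circ]$, which is the period-$2$ path coloring and is induced from $\mathrm{Ci}_{4n}(D_n)$.

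When $b+c=2n$, $(\star)$ collapses to $\varphi(v_{j+4n})=\varphi(v_j)$, so $\varphi$ has period dividing $4n$ and is induced from $\mathrm{Ci}_{4n}(D_n)$. When $b+c=2n+1$, set $\delta(i):=\varphi(v_{i+2n+1})-\varphi(v_i)$; then $(\star)$ rearranges to $\delta(i+2n-1)=-\delta(i)$, so $\delta$ is $\{-1,0,1\}$-valued, $(4n-2)$-periodic, and antisymmetric under shift by $2n-1$. I aim to show $\delta\equiv 0$, which forces $\varphi(v_{i+2n+1})=\varphi(v_i)$ for every $i$, hence a period dividing $2n+1$ (a fortiori $4n+2$), and $\varphi$ is induced from $\mathrm{Ci}_{4n+2}(D_n)$. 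Assuming for contradiction that $\delta(i_0)=1$, the equation $(\star)$ at $i_0$ pins down $\varphi(v_{i_0})=\varphi(v_{i_0+2})=0$ and $\varphi(v_{i_0-2n+1})=\varphi(v_{i_0+2n+1})=1$, while the antisymmetry of $\delta$ gives $\varphi(v_{i_0+2n-1})=1$ and $\varphi(v_{i_0+4n})=0$. Iterating $(\star)$ and invoking the perfect-coloring identity at the pair of black vertices $v_{i_0},\,v_{i_0+2}$ together with the identity at the white vertex $v_{i_0+2n-1}$ (whose neighbourhood is $\{v_{i_0+2k}:0\le k\le 2n-1\}$) propagates the pattern until some intermediate $\varphi(v_j)$ is forced outside $\{0,1\}$. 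The case $b+c=2n-1$ is symmetric under colour swap and yields a period dividing $2n-1\mid 4n-2$, so $\varphi$ is induced from $\mathrm{Ci}_{4n-2}(D_n)$.

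The main obstacle is the rigorous verification that $\delta\equiv 0$ in the two cases $b+c=2n\pm 1$. The antisymmetric identity $\delta(i+2n-1)=-\delta(i)$ alone admits many formal $\{-1,0,1\}$-valued patterns; eliminating every nontrivial one amounts to combining several instances of $(\star)$ with the perfect-coloring equations at a carefully chosen finite set of vertices and producing a numerical contradiction. In effect, this reproves at the level of $\mathrm{Ci}_\infty(D_n)$ the structural restriction on non-bipartite perfect $2$-colorings of $\mathrm{Ci}_{4n\pm 2}(D_n)$ stated in Remark~\ref{twoc}.
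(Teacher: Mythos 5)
Your framework is the same as the paper's: periodicity (Proposition~\ref{period}), the identity obtained by comparing the neighbourhoods of $v_i$ and $v_{i+2}$ (which is exactly the computation behind Lemma~\ref{bc} and Lemma~\ref{xy}), and a case split on $b+c$. Your relation $(\star)$ is correct, and the cases $b+c=4n$ and $b+c=2n$ are handled completely. However, the two remaining cases $b+c=2n\pm1$ --- which are the entire technical content of the paper's Lemmas~\ref{b+c+} and~\ref{b+c-} --- are not proved: you reduce them to the claim $\delta\equiv0$ and then explicitly concede that verifying this claim is ``the main obstacle''. That claim is where essentially all of the work in this theorem lies, so as written the argument has a genuine gap at its core.

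Moreover, the mechanism you sketch for closing the gap is misdirected. If $\delta(i_0)=\pm1$, the local relations never force a value of $\varphi$ outside $\{0,1\}$: they propagate to a globally consistent $\{0,1\}$-pattern, namely the union of alternating $(2n-1)$-chains of Figure~\ref{Fig:-chain} (respectively $(2n+1)$-chains of Figure~\ref{Fig:+chain} when $b+c=2n-1$). So no amount of iterating $(\star)$ will produce the kind of pointwise contradiction you describe; the offending configuration is locally unimpeachable and must be killed by a global argument. The paper's contradiction is a counting one: the chain pattern is periodic with period $4n-2$ (resp.\ $4n+2$) containing exactly $2n-1$ (resp.\ $2n+1$) vertices of each colour, and since that number is odd the two colours cannot be distributed equally between the even and the odd part of the bipartite graph, violating the necessary condition of Remark~\ref{even} for a non-bipartite perfect colouring. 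To finish your proof you would need either this parity count or an explicit evaluation of the absolute neighbourhood counts (not merely their differences) on the chain pattern showing they cannot equal $b$ and $c$ with $b+c=2n\pm1$; neither is carried out in the proposal.
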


Let us state and prove several preliminary lemmas.

\begin{lemma}\label{bc}
Let $n$ be a positive integer. A pair of positive integers $(b,c)$ is admissible for the graph $\mathrm{Ci}_\infty(D_n)$ if and only if $b+c\in\{4n,2n,2n+1,2n-1\}$.
\end{lemma}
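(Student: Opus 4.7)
The plan is to extract a single functional identity from the perfect-coloring definition and read off the admissible values of $b+c$ from a sign-based case analysis. Writing $x_i=1$ if $v_i$ is black and $x_i=0$ otherwise, and setting $\alpha:=2n-b-c$, the condition that $\varphi$ is a perfect $2$-coloring with outer degrees $(b,c)$ is equivalent to
\[
S_i:=\sum_{d\in D_n}(x_{i-d}+x_{i+d})=c+\alpha\,x_i,\qquad i\in\mathbb Z,
\]
since $S_i$ counts the black neighbors of $v_i$ and equals $2n-b=c+\alpha$ when $x_i=1$ and $c$ when $x_i=0$.

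The key step is to compute $S_{i+2}-S_i$ in two different ways. Directly from the formula above it equals $\alpha(x_{i+2}-x_i)$. On the other hand, since $D_n=\{1,3,\dots,2n-1\}$ is a block of consecutive odd integers, shifting the index by $2$ telescopes each half of the defining sum and leaves only the two endpoint contributions, giving $S_{i+2}-S_i=x_{i+2n+1}-x_{i-2n+1}$. Equating the two expressions yields the key identity
\begin{equation*}
x_{i+2n+1}-x_{i-2n+1}=\alpha\,(x_{i+2}-x_i)\quad\text{for every }i\in\mathbb Z. \tag{$\dagger$}
\end{equation*}

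Since $x_j\in\{0,1\}$, the left-hand side of $(\dagger)$ always lies in $\{-1,0,1\}$, so the natural case split is on $|\alpha|$. If $|\alpha|\ge 2$, then $(\dagger)$ forces $x_{i+2}=x_i$ for every $i$, so $\varphi(v_i)$ depends only on the parity of $i$; since $b,c\ge 1$ both colors must occur, so $V_e$ and $V_o$ receive opposite colors, every edge is bichromatic, and hence $b=c=2n$, giving $b+c=4n$. If instead $|\alpha|\le 1$, then directly $b+c\in\{2n-1,2n,2n+1\}$. Together these two cases establish the ``only if'' direction.

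For the converse I would invoke the explicit constructions of Section~\ref{finite}: the bipartite coloring of $\mathrm{Ci}_{4n}(D_n)=K_{2n,2n}$ realizes $(b,c)=(2n,2n)$; the non-bipartite perfect colorings of this complete bipartite graph, obtained by placing $c$ black and $2n-c=b$ white vertices in each part, realize every positive pair with $b+c=2n$; and the non-bipartite constructions of $\mathrm{Ci}_{4n\pm 2}(D_n)$ from Subsections~\ref{4n+2} and~\ref{4n-2} in the ``$C_1=C$'' case of Remark~\ref{twoc} realize all positive pairs with $b+c=2n\pm 1$. By Proposition~\ref{h} each such finite-graph coloring lifts through the canonical covering map $\mathrm{Ci}_\infty(D_n)\to\mathrm{Ci}_t(D_n)$ to a perfect $2$-coloring of the infinite graph with the same outer degrees. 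The main obstacle is spotting the telescoping identity $(\dagger)$: it is precisely the hypothesis that $D_n$ is an interval of consecutive odd integers that makes a shift by $2$ collapse the sum to its two boundary terms, and this is the step that uses the ``continuous set of odd distances'' assumption in an essential way.
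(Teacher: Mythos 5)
Your proof is correct and follows essentially the same route as the paper: the telescoping identity $(\dagger)$ is exactly the paper's observation that $N(v_i)\setminus N(v_{i+2})=\{v_{i-2n+1}\}$ and $N(v_{i+2})\setminus N(v_i)=\{v_{i+2n+1}\}$, and your case split on $|\alpha|$ reproduces the paper's three cases on the colors of these two boundary vertices (with $|\alpha|\ge 2$ forcing the bipartite coloring and $b+c=4n$). The algebraic packaging and the explicit appeal to the Section~\ref{finite} constructions for the converse are a slightly tidier presentation of the same argument.
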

\begin{proof}
The parameter matrix of the bipartite coloring of $\mathrm{Ci}_\infty(D_n)$ is $\left (\begin{array}{cc} 0 & 2n\\ 2n & 0\end{array}\right)$, and $b+c=4n$. The period of this coloring is $[\bullet\circ]$.

Let $\varphi$ be a perfect coloring of $\mathrm{Ci}_\infty(D_n)$ with period length longer than $2$. That means there exists a positive integer $i$ such that $\varphi(v_i)\neq\varphi(v_{i+2})$. 
Without loss of generality let $\varphi(v_i)=\bullet$.
The neighborhoods $N(v_i)$ and $N(v_{i+2})$ share $2n-2$ vertices, and the following holds: $N(v_i)\backslash N(v_{i+2})=\{v_{i-2n+1}\}$, $N(v_{i+2})\backslash N(v_{i})=\{v_{i+2n+1}\}$. 
Let us consider the pair of vertices $(v_{i-2n+1},v_{i+2n+1})$ and their possible colors. 
\begin{enumerate}
\item If $\varphi(v_{i-2n+1})=\varphi(v_{i+2n+1})$, neighborhoods $N(v_i)$ and $N(v_{i+2})$ have the same number of black and white vertices. 
That means every vertex is adjacent to the same number of black and white vertices regardless of its own color. In this case the parameter matrix of the coloring is $\left (\begin{array}{cc} c & b\\ c & b\end{array}\right)$, and $b+c=2n$. 
Moreover, vertices $v_{i-2n+1}$ and $v_{i+2n+1}$ are of the same color for every $i\in\mathbb{Z}$, what means the coloring $\varphi$ is periodic with the period length $4n$. 

\item Let $(\varphi(v_{i-2n+1}),\varphi(v_{i+2n+1}))=(\circ,\bullet)$. In this case every black vertex has one more white vertex in its neighborhood than the white one.
The parameter matrix of the coloring is $\left (\begin{array}{cc} c-1 & b\\ c & b-1\end{array}\right)$, and $b+c=2n+1$.

\item Let $(\varphi(v_{i-2n+1}),\varphi(v_{i+2n+1}))=(\bullet,\circ)$. In this case every black vertex has one more black vertex in its neighborhood than the white one.
The parameter matrix of the coloring is $\left (\begin{array}{cc} c+1 & b\\ c & b+1\end{array}\right)$, which means $b+c=2n-1$.
\end{enumerate}
Since all possibilities are listed, then $b+c\in\{4n,2n,2n+1,2n-1\}$.
\end{proof}

\begin{lemma}\label{xy}
Let $n,b,c$ be positive integers, and the pair $(b,c)$ be admissible for the graph $\mathrm{Ci}_\infty(D_n)$. Let $\varphi$ be a perfect $2$-coloring of this graph with $b$ and $c$ being outer degrees of black and white colors respectively. Then for every $i\in\mathbb{Z}$ the following holds:
\begin{enumerate}
\item If $\varphi(i)=\varphi(i+2)$, then $\varphi(i-2n+1)=\varphi(i+2n+1)$;\label{xy0}
\item If $\varphi(i)\neq\varphi(i+2)$ and $b+c=2n+1$, then $\varphi(i-2n+1)=\varphi(i+2)$, and $\varphi(i+2n+1)=\varphi(i)$; \label{xy+}
\item If $\varphi(i)\neq\varphi(i+2)$ and $b+c=2n-1$, then $\varphi(i-2n+1)=\varphi(i)$ and $\varphi(i+2n+1)=\varphi(i+2)$.\label{xy-}
\end{enumerate}
\end{lemma}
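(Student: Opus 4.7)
The plan is to compare the two neighborhoods $N(v_i)$ and $N(v_{i+2})$ for any fixed $i\in\mathbb{Z}$, exactly in the spirit of the proof of Lemma~\ref{bc}. A direct inspection shows that these sets share precisely the $2n-1$ vertices $v_{i-2n+3}, v_{i-2n+5}, \ldots, v_{i+2n-1}$, while $N(v_i)\setminus N(v_{i+2}) = \{v_{i-2n+1}\}$ and $N(v_{i+2})\setminus N(v_i) = \{v_{i+2n+1}\}$. Let $w$ denote the number of white vertices among the $2n-1$ shared neighbors. Counting white neighbors at $v_i$ and at $v_{i+2}$ then yields two equations whose right-hand sides are $w$ plus a $\{0,1\}$-indicator describing the color of the corresponding ``boundary'' vertex, while the left-hand sides are prescribed by the parameter matrix in terms of $\varphi(v_i)$ and $\varphi(v_{i+2})$.

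For part~\ref{xy0}, if $\varphi(v_i)=\varphi(v_{i+2})$ the two left-hand sides coincide; cancelling $w$ forces the two indicators to agree, and hence $\varphi(v_{i-2n+1}) = \varphi(v_{i+2n+1})$. For parts~\ref{xy+} and~\ref{xy-}, assume without loss of generality that $\varphi(v_i)=\bullet$ and $\varphi(v_{i+2})=\circ$ (the opposite assignment is handled by the symmetric calculation). Then the numbers of white neighbors read $b$ and $2n-c$ respectively, so subtracting the two counting equations gives
\[
[\varphi(v_{i-2n+1}) = \circ] \; - \; [\varphi(v_{i+2n+1}) = \circ] \; = \; (b+c) - 2n.
\]
When $b+c=2n+1$ the right-hand side is $+1$, which forces $v_{i-2n+1}$ white and $v_{i+2n+1}$ black; matching these against the colors of $v_{i+2}$ and $v_i$ gives part~\ref{xy+}. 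When $b+c=2n-1$ the right-hand side is $-1$, forcing $v_{i-2n+1}$ black and $v_{i+2n+1}$ white, which is part~\ref{xy-}.

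I expect no real obstacle here: the essential input is that $N(v_i)\triangle N(v_{i+2})$ consists of exactly the two vertices whose colors the lemma constrains, so a single subtraction of two perfect-coloring equations isolates everything. The only place that warrants care is the outer-degree bookkeeping, namely that $b$ counts white neighbors of a \emph{black} vertex whereas $2n-c$ counts white neighbors of a \emph{white} vertex; once that is set up correctly, the three claims follow by inspecting the three possible values of $(b+c)-2n \in \{0,\pm 1\}$ permitted by Lemma~\ref{bc}.
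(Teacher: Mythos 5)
Your argument is correct and is essentially the paper's own: the paper proves Lemma~\ref{xy} by pointing back to the neighborhood comparison $N(v_i)$ versus $N(v_{i+2})$ in the proof of Lemma~\ref{bc}, which is exactly the counting you carry out (your indicator subtraction just makes the three cases there explicit, together with the easy case $\varphi(i)=\varphi(i+2)$). Incidentally, your count of $2n-1$ shared neighbors is the right one; the paper's proof of Lemma~\ref{bc} states $2n-2$, which is a harmless off-by-one.
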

\begin{proof}
The proof of the lemma follows directly from the definition of perfect coloring and the proof of  Lemma~\ref{bc}.
\end{proof}

The coloring patterns provided by Lemma~\ref{xy} are depicted at the Figure~\ref{Fig:lemmaxy}.

\begin{figure}[H]
\centering
\begin{tikzpicture}
\centering
\draw 
(4,0) node [draw,shape=circle,minimum width=19pt](i){$x$} 
(4,-0.3)node[below] {\tiny $i$}
(5,0) node [draw,shape=circle,minimum width=19pt](i+2){$x$} 
(5,-0.3)node[below] {\tiny $i+2$}
(2,2.5) node [draw,shape=circle, minimum width=19pt](i-2n+3){} 
(2.2,2.8)node[above] {\tiny $i-2n+3$}
(1,2.5) node [draw,shape=circle,minimum width=19pt](i-2n+1){$y$} 
(0.8,2.8)node[above] {\tiny $i-2n+1$}
(8,2.5) node [draw,shape=circle,minimum width=19pt](i+2n+1){$y$} 
(8.2,2.8)node[above] {\tiny $i+2n+1$}
(7,2.5) node [draw,shape=circle,minimum width=19pt](i+2n-1){} 
(6.8,2.8)node[above] {\tiny $i+2n-1$}
;
\draw[black, dashed]
 (6,2.5) node [draw,shape=circle,minimum width=19pt](i-2n+5){} 
  (3,2.5) node [draw,shape=circle,minimum width=19pt](i+2n-3){} 
;
\draw[loosely dotted] 
    (4.1,2.5) -- (4.9,2.5)
    (8.5,2.5) -- (8.9,2.5)
    (1.1,0) -- (1.9,0)
    (7.1,0) -- (7.9,0)
    (0.1,2.5) -- (0.5,2.5)
;
\draw [gray!130]
      (i+2) -- (i-2n+3)
      (i+2) -- (i+2n+1)
      (i+2) -- (i+2n-1)
      (i) -- (i-2n+1)
      (i) -- (i+2n-1)
      (i) -- (i-2n+3)
      (i) -- (i-2n+5)
      (i) -- (i+2n-3)
      (i+2) -- (i-2n+5)
      (i+2) -- (i+2n-3)
;
\end{tikzpicture}\\
{\normalsize \it \enspace 1) Lemma~\ref{xy}, Item~\ref{xy0}: $x,y\in\{0,1\}$ }\\~\\
\begin{tikzpicture}
\centering
\draw 
(4,0) node [draw,shape=circle,minimum width=19pt,color=blue](i){$x$} 
(4,-0.3)node[below] {\tiny $i$}
(5,0) node [draw,shape=circle,minimum width=19pt,color=red](i+2){$\overline x$} 
(5,-0.3)node[below] {\tiny $i+2$}
(2,2.5) node [draw,shape=circle, minimum width=19pt](i-2n+3){} 
(2.2,2.8)node[above] {\tiny $i-2n+3$}
(1,2.5) node [draw,shape=circle,minimum width=19pt,color=red](i-2n+1){$\overline x$} 
(0.8,2.8)node[above] {\tiny $i-2n+1$}
(8,2.5) node [draw,shape=circle,minimum width=19pt,color=blue](i+2n+1){$x$} 
(8.2,2.8)node[above] {\tiny $i+2n+1$}
(7,2.5) node [draw,shape=circle,minimum width=19pt](i+2n-1){} 
(6.8,2.8)node[above] {\tiny $i+2n-1$}
;
\draw[black, dashed]
 (6,2.5) node [draw,shape=circle,minimum width=19pt](i-2n+5){} 
  (3,2.5) node [draw,shape=circle,minimum width=19pt](i+2n-3){} 
;
\draw[loosely dotted] 
    (4.1,2.5) -- (4.9,2.5)
    (8.5,2.5) -- (8.9,2.5)
    (1.1,0) -- (1.9,0)
    (7.1,0) -- (7.9,0)
    (0.1,2.5) -- (0.5,2.5)
;
\draw [gray!130]
      (i+2) -- (i-2n+3)
      (i+2) -- (i+2n+1)
      (i+2) -- (i+2n-1)
      (i) -- (i-2n+1)
      (i) -- (i+2n-1)
      (i) -- (i-2n+3)
      (i) -- (i-2n+5)
      (i) -- (i+2n-3)
      (i+2) -- (i-2n+5)
      (i+2) -- (i+2n-3)
;
\end{tikzpicture}\\
{\normalsize \it \enspace 2) Lemma~\ref{xy}, Item~\ref{xy+}: $b+c=2n+1$, $x\in\{0,1\}$ }\\~\\
\begin{tikzpicture}
\centering
\draw 
(4,0) node [draw,shape=circle,minimum width=19pt, color=blue](i){$x$} 
(4,-0.3)node[below] {\tiny $i$}
(5,0) node [draw,shape=circle,minimum width=19pt,color=red](i+2){$\overline x$} 
(5,-0.3)node[below] {\tiny $i+2$}
(2,2.5) node [draw,shape=circle, minimum width=19pt](i-2n+3){} 
(2.2,2.8)node[above] {\tiny $i-2n+3$}
(1,2.5) node [draw,shape=circle,minimum width=19pt,color=blue](i-2n+1){$x$} 
(0.8,2.8)node[above] {\tiny $i-2n+1$}
(8,2.5) node [draw,shape=circle,minimum width=19pt,color=red](i+2n+1){$\overline x$} 
(8.2,2.8)node[above] {\tiny $i+2n+1$}
(7,2.5) node [draw,shape=circle,minimum width=19pt](i+2n-1){} 
(6.8,2.8)node[above] {\tiny $i+2n-1$}
;
\draw[black, dashed]
 (6,2.5) node [draw,shape=circle,minimum width=19pt](i-2n+5){} 
  (3,2.5) node [draw,shape=circle,minimum width=19pt](i+2n-3){} 
;
\draw[loosely dotted] 
    (4.1,2.5) -- (4.9,2.5)
    (8.5,2.5) -- (8.9,2.5)
    (1.1,0) -- (1.9,0)
    (7.1,0) -- (7.9,0)
    (0.1,2.5) -- (0.5,2.5)
;
\draw [gray!130]
      (i+2) -- (i-2n+3)
      (i+2) -- (i+2n+1)
      (i+2) -- (i+2n-1)
      (i) -- (i-2n+1)
      (i) -- (i+2n-1)
      (i) -- (i-2n+3)
      (i) -- (i-2n+5)
      (i) -- (i+2n-3)
      (i+2) -- (i-2n+5)
      (i+2) -- (i+2n-3)
;
\end{tikzpicture}\\
{\normalsize \it \enspace 3)Lemma~\ref{xy}, Item~\ref{xy-}: $b+c=2n-1$, $x\in\{0,1\}$ }\\
\caption{Coloring patterns of the graph $\mathrm{Ci}_\infty(D_n)$ provided by Lemma~\ref{xy}. }
\label{Fig:lemmaxy}
\end{figure}

Let $G$ be an infinite circulant graph, $\varphi$ be its $2$-coloring, and $s$ be a positive integer. The sequence of vertices $\{{i+js}\}_{j\in\mathbb{Z}}$ of $G$ for an integer $i$ is called {\it $s$-chain}. If the inequality $\varphi({i+js})\neq\varphi({i+(j+1)s})$ holds for every $j$, then the sequence $\{{i+js}\}_{j\in\mathbb{Z}}$ is called an {\it alternating $s$-chain}.

\begin{lemma}\label{b+c+}
Let $n$, $b$ and $c$ be positive integers. Let the pair $(b,c)$ be admissible for the graph $\mathrm{Ci}_\infty(D_n)$. If $b+c=2n+1$, then every perfect non-bipartite $2$-coloring $\varphi$ corresponding to the pair $(b,c)$ {has the period length $2n+1$.}
\end{lemma}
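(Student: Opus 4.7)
The plan is to establish the period-$(2n+1)$ identity $\varphi(j+2n+1)=\varphi(j)$ for every $j\in\mathbb Z$, which is equivalent to the claim. First I would extract a seed: since $\varphi$ is non-bipartite, Remark~\ref{even} forces both colors to appear in each parity class, so neither 2-chain is constant and there exists $i_0\in\mathbb Z$ with $\varphi(i_0)\neq\varphi(i_0+2)$. Lemma~\ref{xy}(\ref{xy+}) applied at $i_0$ supplies the base case
\[
\varphi(i_0+2n+1)=\varphi(i_0) \qquad\text{and}\qquad \varphi(i_0-2n+1)=\varphi(i_0+2).
\]

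The core of the proof is to propagate this equality along the 2-chain. I would argue by induction: assuming at some $j$ that $\varphi(j+2n+1)=\varphi(j)$ and $\varphi(j-2n+1)=\varphi(j+2)$ with $\varphi(j)\neq\varphi(j+2)$, apply Lemma~\ref{xy} at $j+2$. In the sub-case $\varphi(j+2)\neq\varphi(j+4)$, item~(\ref{xy+}) closes the step directly. In the sub-case $\varphi(j+2)=\varphi(j+4)$, item~(\ref{xy0}) yields $\varphi(j+2n+3)=\varphi(j-2n+3)$; one then applies Lemma~\ref{xy} at the auxiliary position $j-2n+1$ and, using the inductive hypothesis $\varphi(j-2n+1)=\varphi(j+2)$, identifies $\varphi(j-2n+3)$ with $\varphi(j+2)$, closing the step. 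Running the induction in both directions along the 2-chain of $i_0$, and in parallel along the 2-chain of the opposite parity (whose seed is provided by non-bipartiteness in the same way), yields $\varphi(j+2n+1)=\varphi(j)$ for all $j\in\mathbb Z$, which is the claimed period.

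The main obstacle is the sub-case $\varphi(j+2)=\varphi(j+4)$ in the inductive step. A priori Lemma~\ref{xy}(\ref{xy0}) leaves a dichotomy $\varphi(j-2n+3)\in\{\varphi(j),\varphi(j+2)\}$: the second value matches the target, but the first would break the induction. To rule out the bad branch one likely needs more than the raw relations of Lemma~\ref{xy}. The natural extra ingredient is the global identity
\[
\sum_{k\in\{0,\,\pm 1,\,\pm 3,\,\ldots,\,\pm(2n-1)\}}\varphi(v_{i+k})=\mathrm{const},
\]
which follows from $b+c=2n+1$ because the set $\{0,\pm1,\pm3,\ldots,\pm(2n-1)\}$ is a complete residue system modulo $2n+1$, and which expresses the perfect-coloring condition via shifts by $1$ rather than by $2$. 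Checking that the bad branch is incompatible with this sum-identity (by comparing the neighborhoods of two consecutive vertices in the derived configuration) eliminates the residual branch and completes the induction.
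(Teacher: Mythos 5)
Your reduction to the identity $\varphi(j+2n+1)=\varphi(j)$, the extraction of a seed $\varphi(i_0)\neq\varphi(i_0+2)$ from Remark~\ref{even}, and Case~A of the inductive step are all sound, and you have correctly located the real difficulty: the branch of Case~B in which $\varphi(j-2n+3)=\varphi(j+2n+3)=\varphi(j)$. But your resolution of that branch does not work as stated. No local application of Lemma~\ref{xy} can eliminate it: the ``alternating $(2n-1)$-chain'' configuration (period $4n-2$ with $\varphi(m)\neq\varphi(m+2n-1)$ for all $m$) satisfies \emph{every} conclusion of Lemma~\ref{xy} at every vertex and realizes exactly the bad branch, so something beyond that lemma is indeed required. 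Your window-sum identity is correct (it is just perfectness rewritten using $b+c=2n+1$, and it is genuinely stronger than Lemma~\ref{xy} when the two windows have centers of opposite parity), but at the moment of the dichotomy the two consecutive windows you propose to compare contain $4n-2$ vertices of which only a handful have been determined, so the comparison yields no contradiction for general $n$. One can check by hand for $n=2$ that a contradiction is eventually reachable, but only after several further rounds of Lemma~\ref{xy} propagation interleaved with window comparisons, and nothing in your sketch shows that this process terminates uniformly in $n$. As written, the inductive step is not closed.

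The paper closes exactly this gap by going global rather than local. It argues by contradiction: if some $i$ has $\varphi(i)\neq\varphi(i+2n+1)$, then repeated use of Lemma~\ref{xy} forces the entire coloring to decompose into alternating $(2n-1)$-chains, hence to have period $4n-2$ with exactly $2n-1$ vertices of each color per period; since $2n-1$ is odd, the vertices of a given color cannot be split equally between the even and the odd part, contradicting Remark~\ref{even} for a non-bipartite coloring. If you want to keep your direct induction, you would have to replace the local check by an argument of this global type --- propagate the bad branch until the full chain structure (or a fully determined window) appears and then invoke the counting obstruction --- at which point you have essentially reproduced the paper's proof.
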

\begin{proof}

Let us suppose that $b+c=2n+1$ and that there is a vertex $i$ of $\mathrm{Ci}_\infty(D_n)$ such that $\varphi(i)\neq\varphi(i+2n+1)$. Let $\varphi(i)=x\in\{0,1\}$, then $\varphi(i+2n+1)=1-x=\overline{x}$.
Let us consider the vertex $i+2$. It cannot be colored with $\overline{x}$, since that would contradict item~\ref{xy+} of  Lemma~\ref{xy}. 
Thus $\varphi(i+2)=x$. According to item~\ref{xy0} of  Lemma~\ref{xy} the vertex $(i-2n+1)$ is colored with $\overline{x}$. Following the same logic we obtain that the vertex $(i+2n-1)$ is colored with $\overline{x}$, and the vertex $(i+4n)$ is colored with $x$. 
Applying item~\ref{xy+} and item~\ref{xy0} of Lemma~\ref{xy} to  vertices $(i+2n-1)$, $(i+4n)$, we obtain equalities $\varphi(i+4n-2)=x$ and $\varphi(i+6n-1)=\overline{x}$. 

Alternatively applying item~\ref{xy+} and item~\ref{xy0} of Lemma~\ref{xy} to pairs of vertices $(i+(2n-1)j,i+2n+1+(2n-1)j)_{j\in\mathbb{N}}$ and $(i+2-(2n+1)j, i-2n-1-(2n+1)j)_{j\in\mathbb{N}}$, we obtain two alternating $(2n-1)$-chains $\{i+(2n-1)j\}_{j\in\mathbb{Z}}$ and $\{i+2+(2n-1)j\}_{j\in\mathbb{Z}}$. Two alternating $(2n-1)$-chains in $\mathrm{Ci}_\infty(D_n)$ built in described way are shown in Figure~\ref{Fig:-chain}. The edges colored with gray do not exist in the graph, they are shown by illustrative reasons and connect pair of vertices $(l, l+2n+1)_{l\in\mathbb{Z}}$.

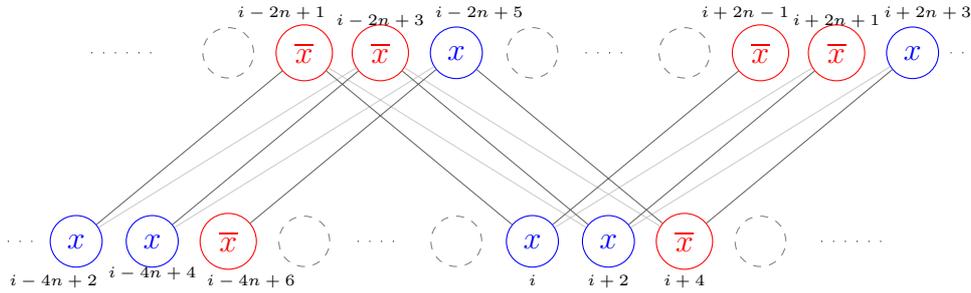
\begin{figure}[H]
\centering

\begin{tikzpicture}
\centering
\draw 
(-2,0) node [draw,shape=circle,color=blue,minimum width=19pt](i-4n+2){$x$} (-2.3,-0.3)node[below] {\tiny $i-4n+2$}
(-1,0) node [draw,shape=circle,color=blue,minimum width=19pt](i-4n+4){$x$} 
(-1,-0.2)node[below] {\tiny $i-4n+4$}
(4,0) node [draw,shape=circle,color=blue,minimum width=19pt](i){$x$} 
(4,-0.3)node[below] {\tiny $i$}
(5,0) node [draw,shape=circle,color=blue,minimum width=19pt](i+2){$x$} 
(5,-0.3)node[below] {\tiny $i+2$}
(6,0) node [draw,shape=circle,color=red,minimum width=19pt](i+4){$\overline x$} 
(6,-0.3)node[below] {\tiny $i+4$}
(2,2.5) node [draw,shape=circle,color=red,minimum width=19pt](i-2n+3){$\overline{x}$} 
(2,2.7)node[above] {\tiny $i-2n+3$}
(3,2.5) node [draw,shape=circle,color=blue,minimum width=19pt](i-2n+5){${x}$} 
(3.3,2.8)node[above] {\tiny $i-2n+5$}
(1,2.5) node [draw,shape=circle,color=red,minimum width=19pt](i-2n+1){$\overline{x}$} 
(0.7,2.8)node[above] {\tiny $i-2n+1$}
(8,2.5) node [draw,shape=circle,color=red,minimum width=19pt](i+2n+1){$\overline{x}$} 
(8,2.7)node[above] {\tiny $i+2n+1$}
(9,2.5) node [draw,shape=circle,color=blue,minimum width=19pt](i+2n+3){${x}$} 
(9.2,2.8)node[above] {\tiny $i+2n+3$}
(7,2.5) node [draw,shape=circle,color=red,minimum width=19pt](i+2n-1){$\overline{x}$} 
(6.8,2.8)node[above] {\tiny $i+2n-1$}
(0,0)node[draw,shape=circle,color=red,minimum width=19pt](i-4n+6){$\overline{x}$} 
(0.3,-0.3)node[below] {\tiny $i-4n+6$}

;
\draw[gray!100, dashed]
    (1,0) circle (9.5pt)
    (3,0) circle (9.5pt)
    (7,0) circle (9.5pt)
    (6,2.5) circle (9.5pt)
    (4,2.5) circle (9.5pt)
    (0,2.5) circle (9.5pt)
    
;
\draw[loosely dotted] 
    (4.7,2.5) -- (5.2,2.5)
    (9.5,2.5) -- (9.9,2.5)
    (1.7,0) -- (2.3,0)
    (7.8,0) -- (8.7,0)
    (-1.8,2.5) -- (-1,2.5)
    (-2.9, 0) -- (-2.5, 0)
;
\draw [gray!40]
	(i-4n+2)--(i-2n+3)
	(i-2n+3)--(i+4)
	(i-4n+4)--(i-2n+5)
	(i-2n+1)--(i+2)
	(i+2)--(i+2n+3)
	(i)--(i+2n+1)

;
           
\draw [gray!120] 
   (i-4n+2) -- (i-2n+1)
   (i-2n+1)--(i)
   (i)--(i+2n-1)
   (i-4n+4) -- (i-2n+3)
   (i-2n+3)--(i+2)
   (i+2)--(i+2n+1)
   (i-4n+6) -- (i-2n+5)
   (i-2n+5)--(i+4)
   (i+4)--(i+2n+3)
   
;
\end{tikzpicture}
\caption{Alternating $(2n-1)$-chains in $\mathrm{Ci}_\infty(D_n)$.}
\label{Fig:-chain}
\end{figure}

In view of two alternating chains shown in Figure~\ref{Fig:-chain} let us consider vertices $(i+4)$ and $(i+2n+3)$. 

{If $\varphi(i+4)=\overline{x}\neq\varphi(i+2)$, then, according to} item~\ref{xy+} of Lemma~\ref{xy}, the equality $\varphi(i+2n+3)=\varphi(i+2)$ holds. Let us note that there is no contradiction with inequality $\varphi(i+2)\neq\varphi(i-2n+3)$ obtained at the earlier steps of the construction process. Applying the same item to pairs of vertices $(i+2+(4n-2)j, i+4+(4n-2)j)_{j\in\mathbb{Z}})$ and $(i+2n+1+(4n-2)j, i+2n+3+(4n-2)j)_{j\in\mathbb{Z}}$ we obtain an alternative $(2n-1)$-chain $\{i+4+ (2n-1)j\}_{j\in\mathbb{Z}}$.

{Let us suppose that $\varphi(i+4)=x=\varphi(i+2)$. In this case} item~\ref{xy0} of Lemma~\ref{xy} gives the equality $\varphi(i+2n+3)=\varphi(i-2n+3)=\overline{x}$.
Applying the same item of Lemma~\ref{xy} to pairs of vertices $(i+2+(4n-2)j, i+4+(4n-2)j)_{j\in\mathbb{Z}})$ and $(i+2n+1+(4n-2)j, i+2n+3+(4n-2)j)_{j\in\mathbb{Z}}$ we obtain an alternative $(2n-1)$-chain $\{i+4+ (2n-1)j\}_{j\in\mathbb{Z}}$.

{If the color of the vertex $i+6$ is $\overline{x}=\varphi(i+4)$, then by }item~\ref{xy0} of Lemma~\ref{xy} the vertex $i+2n+5$ is colored with $\varphi(i-2n+5)=x$. Applying this item to the pairs of vertices $(i+4+(4n-2)j, i+6+(4n-2)j)_{j\in\mathbb{Z}}$ and $(i+2n+3+(4n-2)j, i+2n+5+(4n-2)j)_{j\in\mathbb{Z}}$ we obtain an alternative $(2n-1)$-chain $\{i+6+ (2n-1)j\}_{j\in\mathbb{Z}}$. {If the color of the vertex $i+6$ is $x\neq\varphi(i+4)$, then in accordance with} item~\ref{xy+} of Lemma~\ref{xy} the color of $i+2n+5$ is $\varphi(i+4)=\overline{x}$, the color of $i-2n+7$ is $\varphi(i-4n+6)=\overline{x}$, $\varphi(i-4n+8)=\varphi(i-6n+7)=x$, proceeding the same way we obtain an alternating $(2n-1)$-chain.

{Finally, the whole graph is colored with alternating $(2n-1)$-chains; the period length of the obtained coloring is $4n-2$, and the number of black and white vertices in the period is the same, meaning there are $2n-1$ edges with endpoints colored differently. If the obtained coloring is not bipartite, then this condition contradicts the} Remark~\ref{even}. Thus every perfect non-bipartite coloring corresponding to the case $b+c=2n+1$ is periodic with the period length $2n+1$.

\end{proof}

\begin{lemma}\label{b+c-}
Let $n$, $b$ and $c$ be positive integers. Let the pair $(b,c)$ be admissible for the graph $\mathrm{Ci}_\infty(D_n)$. If $b+c=2n-1$, then every perfect non-bipartite $2$-coloring $\varphi$ corresponding to the pair $(b,c)$ {has the period length $2n-1$.}
\end{lemma}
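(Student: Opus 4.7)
The plan is to mirror the proof strategy of Lemma \ref{b+c+}, replacing item \ref{xy+} of Lemma \ref{xy} by item \ref{xy-} throughout; the forced structure will be alternating $(2n+1)$-chains (rather than alternating $(2n-1)$-chains), and the parity contradiction will come from $2n+1$ being odd. I would assume for contradiction that there exists $i$ with $\varphi(i)\neq\varphi(i+2n-1)$, and set $x=\varphi(i)$ and $\bar x=\varphi(i+2n-1)$.

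The first (local) step is to note that if $\varphi(i+2n-1)\neq\varphi(i+2n+1)$, then item \ref{xy-} applied at position $i+2n-1$ would force $\varphi(i)=\varphi(i+2n-1)$, contradicting the hypothesis; so $\varphi(i+2n+1)=\varphi(i+2n-1)=\bar x$, and item \ref{xy0} at $i+2n-1$ then yields $\varphi(i+4n)=x$. A symmetric argument at $i-2$ rules out $\varphi(i-2)=\bar x$ (otherwise item \ref{xy-} at $i-2$ would force $\varphi(i+2n-1)=x$) and gives $\varphi(i-2)=x$, after which item \ref{xy0} at $i-2$ yields $\varphi(i-2n-1)=\bar x$. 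This establishes the initial alternating pattern $\bar x,x,\bar x$ on the $(2n+1)$-chain through $i$, at the positions $i-2n-1$, $i$, and $i+2n+1$.

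The iteration step notes that the pair $(i+2n+1,\,i+4n)$ is again at distance $2n-1$ with differing colors $(\bar x,x)$, so rerunning the local argument on it yields $\varphi(i+4n+2)=x$ and $\varphi(i+6n+1)=\bar x$. Continuing in both directions propagates the alternation along the $(2n+1)$-chain through $i$; the same analysis applied to neighbouring chains, combined with case analysis on the colors at intermediate positions (in the spirit of the iteration in Lemma \ref{b+c+}'s proof), forces every $(2n+1)$-chain of the graph to alternate in color. The resulting coloring has period length $2(2n+1)=4n+2$ and contains exactly $2n+1$ vertices of each color per period, one per alternating chain.

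Within a period of length $4n+2$ the two bipartite parts of the graph each contain $2n+1$ vertices; by Remark \ref{even}, the non-bipartiteness of $\varphi$ demands equal color counts in the two parts, which would force $(2n+1)/2$ vertices of each color in each part, impossible since $2n+1$ is odd. This contradiction shows that no such $i$ can exist, so $\varphi(j)=\varphi(j+2n-1)$ for every $j$, and $\varphi$ has period length $2n-1$. The main obstacle will be the lateral propagation in the iteration step: the case analysis at intermediate positions must be carried out carefully to cover every $(2n+1)$-chain, with each sub-case either extending the alternating pattern or collapsing into an immediate contradiction via item \ref{xy-}.
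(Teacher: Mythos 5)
Your proposal is correct and follows essentially the same route as the paper's proof: assume $\varphi(i)\neq\varphi(i+2n-1)$, use items~\ref{xy0} and~\ref{xy-} of Lemma~\ref{xy} to force alternating $(2n+1)$-chains throughout the graph, and derive the contradiction from Remark~\ref{even} together with the oddness of $2n+1$. The only difference is expository: the paper carries out the lateral propagation explicitly for the vertex $i+2$ (two sub-cases, both of which extend the alternation rather than collapsing into a contradiction), which is precisely the step you flag as needing the most care.
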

\begin{proof}
The proof of this lemma is similar to the previous one. First we suppose that there is a vertex $i$ such that $\varphi(i)\neq\varphi(i+2n-1)$.
According to item~\ref{xy-} of Lemma~\ref{xy} the vertex $i-2$ cannot be colored with $\overline{x}$, thus $\varphi(i-2)=x$ and $\varphi(i-2n-1)=\varphi(i+2n-1)=\overline{x}$. 
With the same logic $\varphi(i+2n+1)=\varphi(i+2n-1)=\overline{x}$ and $\varphi(i-4n)=\varphi(i)=x$; $\varphi(i-2n-3)=\varphi(i-2n-1)=\overline{x}$ and $\varphi(i-4n-2)=\varphi(i-2)=x$. Proceeding the same way we will obtain two alternative $(2n+1)$-chains, one is $\{i+(2n+1)j\}_{j\in\mathbb Z}$, another is $\{i-2+(2n+1)j\}_{j\in\mathbb{Z}}$.

The corresponding picture is shown in Figure~\ref{Fig:+chain}. Edges colored with gray represent parts of chains.

Let us consider the vertex $i+2$ and suppose that $\varphi(i+2)=x$.

The vertex $i+2n+3$ cannot be colored with $x$, because, provided with inequality $\varphi(i+2n+1)=\overline{x}\neq\varphi(i+2)$ it would contradict item~\ref{xy-} of Lemma~\ref{xy}, thus $\varphi(i+2n+3)=\varphi(i+2n+1)$. 
With the same logic $\varphi(i+2+(2n+1)j)=\varphi(i+(2n+1)j)$ for every $j\in\mathbb{Z}$, and finally we obtain an alternating chain $\{i+2+(2n+1)j\}_{j\in\mathbb{Z}}$.

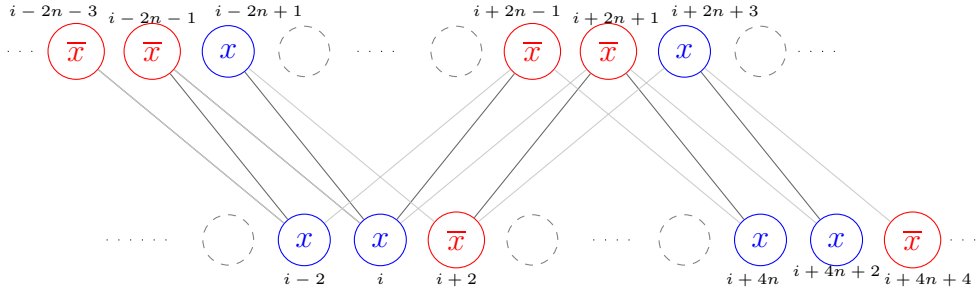
\begin{figure}[H]
\centering

\begin{tikzpicture}
\centering
\draw 
(-2,2.5) node [draw,shape=circle,color=red,minimum width=19pt](i-2n-3){$\overline x$}
(-2.3,2.8)node[above] {\tiny $i-2n-3$}
(-1,2.5) node [draw,shape=circle,color=red,minimum width=19pt](i-2n-1){$\overline x$} 
(-1,2.7)node[above] {\tiny $i-2n-1$}
(0,2.5) node [draw,shape=circle,color=blue,minimum width=19pt](i-2n+1){$ x$} 
(0.4,2.8)node[above] {\tiny $i-2n+1$}
(4,2.5) node [draw,shape=circle,color=red,minimum width=19pt](i+2n-1){$\overline x$} 
(3.8,2.8)node[above] {\tiny $i+2n-1$}
(5,2.5) node [draw,shape=circle,color=red,minimum width=19pt](i+2n+1){$\overline x$} 
(5.1,2.7)node[above] {\tiny $i+2n+1$}
(6,2.5) node [draw,shape=circle,color=blue,minimum width=19pt](i+2n+3){$ x$} 
(6.4,2.8)node[above] {\tiny $i+2n+3$}
(2,0) node [draw,shape=circle,color=blue,minimum width=19pt](i){${x}$} 
(2,-0.3)node[below] {\tiny $i$}
(3,0) node [draw,shape=circle,color=red,minimum width=19pt](i+2){$\overline{x}$} 
(3,-0.3)node[below] {\tiny $i+2$}
(1,0) node [draw,shape=circle,color=blue,minimum width=19pt](i-2){${x}$} 
(1,-0.3)node[below] {\tiny $i-2$}
(8,0) node [draw,shape=circle,color=blue,minimum width=19pt](i+4n+2){${x}$} 
(8,-0.2)node[below] {\tiny $i+4n+2$}
(9,0) node [draw,shape=circle,color=red,minimum width=19pt](i+4n+4){$\overline{x}$} 
(9.2,-0.3)node[below] {\tiny $i+4n+4$}
(7,0) node [draw,shape=circle,color=blue,minimum width=19pt](i+4n){${x}$} 
(6.9,-0.3)node[below] {\tiny $i+4n$}

;
\draw[gray!100, dashed]
    (0,0) circle (9.5pt)
    (4,0) circle (9.5pt)
    (6,0) circle (9.5pt)
    (7,2.5) circle (9.5pt)
    (3,2.5) circle (9.5pt)
    (1,2.5) circle (9.5pt)
;
\draw[loosely dotted] 
    (4.8,0) -- (5.4,0)
    (9.5,0) -- (9.9,0)
    (1.7,2.5) -- (2.2,2.5)
    (7.5,2.5) -- (8,2.5)
    (-1.6,0) -- (-0.8,0)
    (-2.9, 2.5) -- (-2.5, 2.5)
;
\draw [gray!120]
	(i-2)--(i-2n-3)
	(i)--(i+2n-1)
	(i+2n+1)--(i+4n)
	(i-2)--(i-2n-1)
	(i-2n-1)--(i)
	(i+2)--(i+2n+1)
	(i-2n+1)--(i)
	(i+2n+3)--(i+4n+2)

;
           
\draw [gray!40] 
	(i-2n-3)--(i-2)
	(i-2n-1)--(i)
	(i)--(i+2n+1)
	(i+2n+1)--(i+4n+2)
	(i-2)--(i+2n-1)
	(i+2n-1)--(i+4n)
	(i+2n+3)--(i+4n+4)
	(i+2)--(i+2n+3)
	(i+2)--(i-2n+1)

;
\end{tikzpicture}
\caption{Alternating $(2n+1)$-chains in $\mathrm{Ci}_\infty(D_n)$.}
\label{Fig:+chain}
\end{figure}

In the case $\varphi(i+2)=\overline{x}$ we use item~\ref{xy-} of Lemma~\ref{xy} to color $i-2n+1$ with $x$. 
Considering the equalities $\varphi(i+2n+1)=\varphi(i+2)=\overline{x}$ and $\varphi(i+4n+2)=x$ we color $i+2n+3$ with $x$ in accordance with the the same pattern of item~\ref{xy-}. Proceeding acting the same way with vertices $i+2+(2n+1)j$ for $j\in\mathbb{Z}$ we obtain an alternating $(2n+1)$-chain.
We can proceed the same way and color the graph with alternating $(2n+1)$-chains.

{The obtained coloring has the period length $4n+2$ with equal number of black and white vertices in the period, i.e. $2n+1$ edges having differently colored endpoints. This contradicts the necessary condition for the non-bipartite coloring to be perfect} (Remark~\ref{even}), thus the only possible period length for this case is $2n-1$.

\end{proof}

\begin{proof}[Proof of Theorem~\ref{thm}]

According to Lemma~\ref{bc}, the sum $b+c$ can be equal to $4n$, $2n$, $2n+1$ or $2n-1$.

The only possible perfect coloring corresponding to the admissible pair $(b,c)$ with $b+c=4n$ is bipartite, and its minimal period is $[12]$.
 
Let us consider the other possible values of the sum $b+c$.

\begin{enumerate}
\item Let $b+c=2n$. By Lemma~\ref{bc}, every perfect coloring corresponding to the pair $(b,c)$ is periodic with the period length $4n$ and the parameter matrix $M_{0}=\left (\begin{array}{cc} 2n-b & b\\ 2n-b & b \end{array}\right)$. 
{The form of the matrix implies that the color composition of each vertex is independent of its own color.
In this case any coloring with $b$ white and $2n-b$ black vertices provided with the condition from} Remark~\ref{even} is perfect.

Let us consider the graph $\mathrm{Ci}_{4n}(D_n)=(V,E)$ with $V=\{0,1,2,\dots, 4n-1\}$. It is the complete bipartite graph $K_{2n,2n}$, and thus for any its perfect $2$-coloring every vertex is adjacent to the same number of white and black vertices regardless of its own color. The parameter matrix of any such coloring is necessary of the form $M_{0}$ for the suitable parameter $b$.

Provided with Proposition~\ref{h} and the written above one can construct a surjective map from the set of perfect colorings of a finite circulant to the set of perfect colorings of an infinite one that maps a coloring $\phi$ to the coloring of $\mathrm{Ci}_\infty(D_n)$ with period $[\phi(0)\phi(1)\phi(2)\cdots\phi(4n-1)]$. It is easy to see that every coloring of infinite circulant can be considered as the one induced from the coloring of the finite graph. Let us note that different colorings of the finite circulant can induce the same coloring of the infinite circulant.

\item Let $b+c=2n+1$. According to Lemma~\ref{b+c+}, every perfect $2$-coloring corresponding to the pair has the period length $2n+1$ and the parameter matrix $M_{+1}=\left (\begin{array}{cc} 2n-b & b\\ 2n-b+1 & b-1 \end{array}\right)$. 

Let us consider the graph $\mathrm{Ci}_{4n+2}(D_n)$. The set of its perfect colorings is described in Subsection~\ref{4n+2}.{ In the case of two colors the non-bipartite construction requires each edge of the perfect matching $P_{2n+1}$ being monochrome. 
Let $\phi$ be a perfect $2$-coloring of the graph $\mathrm{Ci}_{4n+2}(D_n)$. It has parameter matrix $M_{+1}$ . Using the} Remark~\ref{even} and Proposition~\ref{h} {we can deduce that every perfect $2$-coloring of the infinite circulant is induced from a perfect coloring of $\mathrm{Ci}_\infty(D_n)$. The induced coloring of the infinite graph has the period $[\phi(0)\phi(1)\phi(2)\cdots\phi(4n+1)]$.}

\item Let $b+c=2n-1$. By Lemma~\ref{b+c-} every perfect $2$-coloring corresponding to the pair $(b,c)$ has the period length $2n-1$ and the parameter matrix $M_{-1}=\left (\begin{array}{cc} 2n-b & b\\ 2n-b-1 & b+1 \end{array}\right)$.

Let us consider the graph $\mathrm{Ci}_{4n-2}(D_n)$. The set of its perfect colorings is described in Subsection~\ref{4n-2}. {In the case of two colors and non-bipartite coloring every edge of the perfect matching $P_{2n-1}$ must be monochrome. Thus, such a coloring has the period length $2n-1$ and the parameter matrix of such a coloring is $M_{-1}$.
We can construct a surjective map from the set of perfect colorings of a finite circulant to the set of perfect colorings of an infinite one that maps a coloring $\phi$ to the coloring of $\mathrm{Ci}_\infty(D_n)$ with period $[\phi(0)\phi(1)\phi(2)\cdots\phi(4n-3)]$. It is easy to see that every coloring of infinite circulant can be considered as the one induced from the coloring of the finite graph.}

\end{enumerate}
\end{proof}

The main result of the paper confirms Conjecture~\ref{conj} in the case of two colors. 
{Let us note, that in this case the set of perfect colorings of the infinite path graph is a subset of perfect colorings induced from the colorings of the finite circulants $\mathrm{Ci}_t(D_n)$ for $t=4n-2,4n,4n+2$, and does not play a role in the colorings enumeration. Nevertheless, it will not be the case for a greater number of colors. For example, the coloring with the period $[1234567]$ is perfect for $\mathrm{Ci}_\infty(D_2)$, but is not perfect for any finite circulant $\mathrm{Ci}_t(D_2)$, $t=6,8,10$.}

We described how to construct perfect colorings of finite circulants from the conjecture, but the general question remains open. The main obstacle on the way of further classification of perfect colorings of infinite circulants is a large number of cases to study. The techniques of case reduction and examination of perfect colorings of such graphs are yet to be described. 

\section*{Acknowledgments}

Authors would like to thank Sergey~V.~Avgustinovich for helpful discussions and to express their gratitude to the anonymous reviewer for the careful reading of the manuscript and insightful comments and suggestions. 

\bigskip

\end{document}